\newtheorem{theorem}{Theorem}
\newtheorem*{thmmain}{Theorem}
\newtheorem{lemma}[theorem]{Lemma}
\theoremstyle{definition}
\theoremstyle{remark}
\newtheorem{remark}[theorem]{Remark}
\numberwithin{equation}{section}
\begin{document}
\title[Gauss curvature flow]
 {A note on the Gauss curvature flow}
\author[M.N. Ivaki]{Mohammad N. Ivaki}
\address{Institut f\"{u}r Diskrete Mathematik und Geometrie, Technische Universit\"{a}t Wien,
Wiedner Hauptstr. 8--10, 1040 Wien, Austria}
\curraddr{}
\email{mohammad.ivaki@tuwien.ac.at}

\dedicatory{}
\subjclass[2010]{35K55, 35B65, 52A05}
\keywords{}
\begin{abstract}
Using polar convex bodies and the $C_0$-bounds from Guan and Ni \cite{PL}, we obtain a uniform lower bound on the Gauss curvature of the normalized solution of the Gauss curvature flow without using Chow's Harnack inequality \cite{Ch2}.
\end{abstract}
\maketitle
\section{Introduction}
Firey \cite{W} introduced the Gauss curvature flow to model the changing shape of a tumbling stone subjected to collisions from all directions with uniform frequency. Assuming the existence, uniqueness, and regularity of the solutions (settled later by K.S. Chou (K. Tso) \cite{cho}), Firey proved that if the initial convex surface is origin-symmetric, the solution to the flow converges to the origin in finite time and the normalized solution, having a fixed volume equal to the volume of the unit ball, converges in the Hausdorff distance to the unit ball. Furthermore, Firey conjectured that the same conclusion must hold if one begins the flow from any convex surface. Andrews gave an affirmative answer to this question through an elegant approach \cite{BA1}. In \cite{BA1}, he applied the parabolic maximum principle to the difference of principal curvatures and used Chow's Harnack inequality to obtain regularity of the normalized solution and the asymptotic roundness. Recently, Guan and Ni \cite{PL} studied some fine properties of Firey's entropy functional and used Chow's Harnack inequality to resolve the long-standing issue with the regularity of normalized solutions in higher dimensions. The issue involves obtaining a lower bound on the Gauss curvature of the normalized solution without imposing any condition on the initially smooth, strictly convex hypersurface. In this paper, we show that by using polar convex bodies, it is possible to avoid the Harnack inequality in the process of obtaining a uniform lower bound. The uniform upper bound on the Gauss curvature of the normalized solution, in view of Tso's trick \cite{cho}, is fairly easy and has been known for a long time (see \cite[Theorem 5.1]{PL} for details). Our approach may prove useful for geometric flows when no improving pinching estimates or Harnack inequalities are known to exist or when the flow is not invariant under Euclidean translations. We consider the evolution of polar bodies and apply the maximum principle to the difference of a suitable power of the Euclidean norm of ``polar embedding" and the speed of the ``dual flow." We remark that in the presence of an improving pinching estimate, one may circumvent the need for Harnack estimates by applying the method developed by Andrews and McCoy \cite{AJ}. See \cite{PL} for many useful comments on earlier works by Andrews, Chow, and Hamilton on the Gauss curvature flow \cite{BA,BA1,Ch2,Ha}.
\section{Background material}
The setting of this paper is $n$-dimensional Euclidean space. Let $\mathbb{S}^{n-1}$ denote the unit sphere, centered at the origin, in $\mathbb{R}^n.$ Let $\bar{g}_{ij}$ and $\bar{\nabla}$ denote, respectively, the standard metric and the Levi-Civita connection of $\mathbb{S}^{n-1}$. A compact convex subset of $\mathbb{R}^{n}$ with non-empty interior is called a \emph{convex body}. Write $\mathcal{F}^n$ for the set of smooth, strictly convex bodies in $\mathbb{R}^n.$

For a convex body $K\in\mathcal{F}^n$ let $\partial K$ denote its boundary and $\nu:\partial K\to \mathbb{S}^{n-1}$ denote its Gauss map; that is, at each point $x\in\partial K$, $\nu(x)$ is the unit outward normal at $x$. Let $\varphi:\partial K\to\mathbb{R}^n$ be a smooth embedding of $\partial K$. The support function of $K$ with the origin in its interior, $s_K:\mathbb{S}^{n-1}\to[0,\infty)$, can be defined for $x\in\partial K$ by
\[s_K(\nu(x)):= \langle \varphi(x), \nu(x) \rangle.\]
The Gauss curvature of $\partial K$, $\mathcal{K}$, is related to the support function of the convex body by \[\sigma_{n-1}:=\frac{1}{\mathcal{K}\circ\nu^{-1}}:=\frac{\det(\mathfrak{r}_{ij}:=\bar{\nabla}_i\bar{\nabla}_js+\bar{g}_{ij}s)}{\det \bar{g}_{ij}}.\]

Let $\varphi_0$ be a smooth, strictly convex embedding of $\partial K$. A family of convex bodies $\{K_t\}_t\subset \mathcal{F}^n$ given by the smooth embeddings $\varphi:\partial K\times [0,T)\to \mathbb{R}^n$ is said to be a solution of the Gauss curvature flow if $\varphi$ satisfies the initial value problem
\begin{equation}\label{e: flow0}
 \partial_{t}\varphi(x,t)=-\mathcal{K}(x,t)\, \nu(x,t),~~
 \varphi(\cdot,0)=\varphi_{0}(\cdot).
\end{equation}
In this equation, $\mathcal{K}(x,t)$ is the Gauss curvature of $\partial K_t:=\varphi(\partial K,t)$ at the point where the outer unit normal is $\nu(x,t)$, and $T$ is the maximal time that the flow exists. Tso \cite{cho} proved that $K_t$ shrink to a point in a finite time $T$. Throughout this paper, we assume that this point is the origin of $\mathbb{R}^n.$
It is easy to see that support functions of $\{K_t\}_{t\in [0,T)}$ satisfy
\begin{equation}\label{e: ev support}
 \partial_{t}s(z,t)=-\frac{1}{\sigma_{n-1}(z,t)},~~
 s(\cdot ,t)=s_{K_t}(\cdot).
\end{equation}

\section{Evolution equation of polar bodies}
The polar body, $K^{\ast}$, of the convex body $K$ with the origin in its interior is the convex body that is defined as
\[K^{\ast}=\{x\in\mathbb{R}^n: \langle x,y\rangle\leq 1 \mbox{~for~all~}y\in K\}.\]
When we are dealing with quantities associated with $K^{\ast}$ we will indicate this with a super-script $~^{\ast}.$

We want to obtain the evolution equation of $s_{K_t^{\ast}}$. To do so, we first parameterize $\partial K_t$ over the unit sphere
\[\varphi=r(z(\cdot,t),t)z(\cdot,t):\mathbb{S}^{n-1}\to\mathbb{R}^{n},\]
where $r(z(\cdot,t),t)$ is the radial function of $K_t$ in the direction $z(\cdot,t).$

Let $K$ be a smooth, strictly convex body with the origin in its interior. Suppose $\partial K$ is parameterized with the radial function $r.$
The metric $[g_{ij}]_{1\leq i,j\leq n-1}$, unit normal $\nu$, support function $s$, and the second fundamental form $[h_{ij}]_{1\leq i,j\leq n-1}$ of $\partial K$
can be written in terms of $r$ and its partial derivatives as follows:
\begin{description}
  \item[a] $\displaystyle g_{ij}=r^2\bar{g}_{ij}+\bar{\nabla}_ir\bar{\nabla}_jr,$
  \item[b] $\displaystyle \nu=\frac{r z-\bar{\nabla}r}{\sqrt{r^2+\|\bar{\nabla}r\|_{\bar{g}}^2}},$
  \item[c] $\displaystyle s=\frac{r^2}{\sqrt{r^2+\|\bar{\nabla}r\|_{\bar{g}}^2}},$
  \item[d] $\displaystyle h_{ij}=\frac{-r\bar{\nabla}_i\bar{\nabla}_jr+2\bar{\nabla}_ir\bar{\nabla}_jr+
  r^2\bar{g}_{ij}}{\sqrt{r^2+\|\bar{\nabla}r\|_{\bar{g}}^2}}.$
\end{description}
Since $\frac{1}{r}$ is the support function of $\partial K^{\ast}$, we can calculate the entries of $[\mathfrak{r}^{\ast}_{ij}]_{1\leq i,j\leq n-1}$:
\[\mathfrak{r}^{\ast}_{ij}=\bar{\nabla}_i\bar{\nabla}_j\frac{1}{r}+\frac{1}{r}\bar{g}_{ij}=
\frac{-r\bar{\nabla}^2_{ij}r+2\bar{\nabla}_ir\bar{\nabla}_jr+r^2\bar{g}_{ij}}{r^3}.\]
Thus, using (\textbf{d}) we get
\begin{equation}\label{e: h and h polar}
\mathfrak{r}^{\ast}_{ij}=\frac{\sqrt{r^2+\|\bar{\nabla}r\|_{\bar{g}}^2}}{r^3}h_{ij}.
\end{equation}
\begin{lemma}
As $K_t$ evolve according to (\ref{e: ev support}), their radial functions evolve as follows
\begin{align}\label{e: ev r}
\partial_t r=-\frac{\sqrt{r^2+\|\bar{\nabla}r\|_{\bar{g}}^2}}{r}\mathcal{K}.
\end{align}
\end{lemma}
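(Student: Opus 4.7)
The plan is to work in the radial parametrization $\varphi(z,t)=r(z,t)\,z$ for $z\in\mathbb{S}^{n-1}$ and exploit the fact that, up to tangential diffeomorphisms, the Gauss curvature flow is characterized by its normal speed.

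First, I would recall that the original flow equation \eqref{e: flow0} is written in a Lagrangian parametrization over $\partial K$; any other time-dependent parametrization of $\partial K_t$ differs from it by a (time-dependent) tangential diffeomorphism, which affects only the tangential part of $\partial_t\varphi$. Consequently, for \emph{any} parametrization, the normal component of $\partial_t\varphi$ equals $-\mathcal{K}$. Adopting the radial parametrization, where the domain point $z\in\mathbb{S}^{n-1}$ is held fixed, gives $\partial_t\varphi=(\partial_t r)\,z$, so
\begin{equation*}
(\partial_t r)\,\langle z,\nu\rangle = \langle\partial_t\varphi,\nu\rangle=-\mathcal{K}.
\end{equation*}

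Next I would compute $\langle z,\nu\rangle$ using formula (\textbf{b}) for the unit normal in terms of $r$:
\begin{equation*}
\langle z,\nu\rangle=\frac{\langle z,rz-\bar{\nabla}r\rangle}{\sqrt{r^2+\|\bar{\nabla}r\|_{\bar g}^2}}=\frac{r}{\sqrt{r^2+\|\bar{\nabla}r\|_{\bar g}^2}},
\end{equation*}
where I used that $\bar{\nabla}r$ is a tangent vector to $\mathbb{S}^{n-1}$ at $z$ and hence orthogonal to $z$ in $\mathbb{R}^n$. Substituting back and solving for $\partial_t r$ yields exactly \eqref{e: ev r}.

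There is no real obstacle here; the only thing to keep in mind is the standard point that the flow equation $\partial_t\varphi=-\mathcal{K}\nu$ determines the evolving hypersurface only up to tangential reparametrization, so the correct reading of it in the radial gauge is the equality of normal components, not of full vectors. As a cross-check, one could instead differentiate the relation $s=r^2/\sqrt{r^2+\|\bar{\nabla}r\|_{\bar g}^2}$ from (\textbf{c}) in $t$ and compare with \eqref{e: ev support}, but the argument above is more direct and avoids differentiating $s$ along the Gauss map.
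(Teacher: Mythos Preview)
Your argument is correct. It differs from the paper's in the choice of gauge: the paper keeps the Lagrangian parametrization satisfying $\partial_t\varphi=-\mathcal K\,\nu$ exactly, writes $\varphi=r(z(\cdot,t),t)\,z(\cdot,t)$ with a \emph{moving} direction $z$, expands $\partial_t\varphi$, and then splits the vector identity into its $z$-component and its $T_z\mathbb S^{n-1}$-component, obtaining a pair of equations for $\partial_t r$ and $\partial_t z$ which it combines. You instead pass to the fixed-$z$ radial gauge and invoke the parametrization-invariance of the normal speed, so a single inner product with $\nu$ (using formula~(\textbf{b})) finishes the job. Your route is shorter and more conceptual, since it avoids computing the tangential drift $\partial_t z$ altogether; the paper's route has the minor advantage of making that drift explicit, which can be useful if one also needs the tangential reparametrization, but for the present lemma your version is the cleaner one.
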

\begin{proof}
\begin{align*}
\partial_t \varphi&=\partial _t \left(r(z(\cdot,t),t)z(\cdot,t)\right)\\
&=(\partial _t r)z+\langle\bar{\nabla}r,\partial_t z\rangle z+r\partial _t z\\
&=-\mathcal{K}\nu\\
&=-\mathcal{K}\frac{r z-\bar{\nabla}r}{\sqrt{r^2+\|\bar{\nabla}r\|_{\bar{g}}^2}},
\end{align*}
where from the third line to the fourth line we used (\textbf{b}).
By comparing the terms on the second line with the terms on the fourth line we obtain
\begin{align}\label{e: tan com}
r\partial_t z=\frac{\mathcal{K}\bar{\nabla}r}{\sqrt{r^2+\|\bar{\nabla}r\|_{\bar{g}}^2}},
\end{align}
and
\begin{equation}\label{e: nor com}
\partial _t r+\langle\bar{\nabla}r,\partial_t z\rangle =-\frac{\mathcal{K}r}{\sqrt{r^2+\|\bar{\nabla}r\|_{\bar{g}}^2}}.
\end{equation}
Replacing $\partial_t z$ in (\ref{e: nor com}) by its equal expression (e.q. equation (\ref{e: tan com})) completes the proof.
\end{proof}
\begin{theorem} \label{thm: main}
As $\{K_t\}_{t\in [0,T)}$ evolves according to evolution equation (\ref{e: ev support}), the family of convex bodies $\{K^{\ast}_t:=(K_t)^{\ast}\}_{t\in [0,T)}$ evolves by
$$s^{\ast}:\mathbb{S}^{n-1}\times[0,T)\to\mathbb{R}$$
\begin{align*}
\partial_t s^{\ast}(\cdot,t)=\frac{s^{\ast n+2}\sigma_{n-1}^{\ast}}{\left(s^{\ast 2}+\|\bar{\nabla}s^{\ast}\|_{\bar{g}}^2\right)^{\frac{n}{2}}}(\cdot,t),~~ s^{\ast}(\cdot,t)=s_{K_t^{\ast}}(\cdot).
\end{align*}
\end{theorem}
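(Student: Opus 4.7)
The plan is to start from the identity $s^{\ast}=1/r$, which holds because $1/r$ is the support function of the polar body (as already noted in the excerpt). Differentiating in $t$ gives
\[
\partial_t s^{\ast}=-\frac{1}{r^2}\,\partial_t r,
\]
and substituting the preceding Lemma's formula for $\partial_t r$ converts this into
\[
\partial_t s^{\ast}=\frac{\sqrt{r^2+\|\bar{\nabla}r\|_{\bar{g}}^2}}{r^3}\,\mathcal{K}.
\]
Everything left to do is to rewrite this expression in terms of $s^{\ast}$, $\bar{\nabla}s^{\ast}$, and $\sigma^{\ast}_{n-1}$.

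Next I would carry out two substitutions. First, from $s^{\ast}=1/r$ one gets $\bar{\nabla}s^{\ast}=-r^{-2}\bar{\nabla}r$, hence
\[
r^2+\|\bar{\nabla}r\|_{\bar{g}}^2=\frac{s^{\ast 2}+\|\bar{\nabla}s^{\ast}\|_{\bar{g}}^2}{s^{\ast 4}}.
\]
Second, I would translate $\mathcal{K}$ into a polar quantity. Writing $\mathcal{K}=\det h_{ij}/\det g_{ij}$ on $\partial K$, I would apply the matrix determinant lemma to formula (\textbf{a}) to obtain
\[
\det g_{ij}=r^{2(n-2)}\bigl(r^2+\|\bar{\nabla}r\|_{\bar{g}}^2\bigr)\det\bar{g}_{ij},
\]
and take determinants in the polar identity (\ref{e: h and h polar}) to get
\[
\det h_{ij}=\frac{r^{3(n-1)}}{\bigl(r^2+\|\bar{\nabla}r\|_{\bar{g}}^2\bigr)^{(n-1)/2}}\det\mathfrak{r}^{\ast}_{ij}.
\]
Combining these and using $\sigma^{\ast}_{n-1}=\det\mathfrak{r}^{\ast}_{ij}/\det\bar{g}_{ij}$ yields
\[
\mathcal{K}=\frac{r^{\,n+1}\,\sigma^{\ast}_{n-1}}{\bigl(r^2+\|\bar{\nabla}r\|_{\bar{g}}^2\bigr)^{(n+1)/2}}.
\]

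Finally, I would plug this expression for $\mathcal{K}$ and the expression for $r^2+\|\bar{\nabla}r\|_{\bar{g}}^2$ into the formula $\partial_t s^{\ast}=r^{-3}\sqrt{r^2+\|\bar{\nabla}r\|_{\bar{g}}^2}\,\mathcal{K}$. Tracking the exponents of $s^{\ast}$ (the factor $r^{-3}\cdot r^{n+1}=r^{n-2}$ contributes $s^{\ast\,-(n-2)}$, and the denominator $(r^2+\|\bar{\nabla}r\|_{\bar{g}}^2)^{n/2}$ contributes a factor $s^{\ast\,2n}$ after clearing) gives exactly
\[
\partial_t s^{\ast}=\frac{s^{\ast\,n+2}\sigma^{\ast}_{n-1}}{\bigl(s^{\ast\,2}+\|\bar{\nabla}s^{\ast}\|_{\bar{g}}^2\bigr)^{n/2}},
\]
which is the claimed identity.

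The only non-routine step is the translation of $\mathcal{K}$ into polar data; the rest is bookkeeping with the substitution $r=1/s^{\ast}$. There is no maximum-principle or PDE input at this stage — the result is a purely algebraic consequence of the Lemma together with relations (\textbf{a})--(\textbf{d}) and (\ref{e: h and h polar}), so I expect no real obstacle beyond keeping track of the powers of $r$ and of $r^2+\|\bar{\nabla}r\|_{\bar{g}}^2$.
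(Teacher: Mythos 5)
Your proposal is correct and follows essentially the same route as the paper: differentiate $s^{\ast}=1/r$ using the radial evolution equation, convert $\mathcal{K}$ to $\sigma^{\ast}_{n-1}$ via the determinant of (\ref{e: h and h polar}) and the relation $\det g_{ij}=r^{2n-4}\left(r^2+\|\bar{\nabla}r\|_{\bar{g}}^2\right)\det\bar{g}_{ij}$ (which the paper simply quotes and you rederive via the matrix determinant lemma), then substitute $r=1/s^{\ast}$. The exponent bookkeeping in your final step checks out.
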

\begin{proof}
Using the identities
\[\mathcal{K}=\frac{\det h_{ij}}{\det g_{ij}},~~
 \frac{1}{\sigma_{n-1}^{\ast}}=\frac{\det \bar{g}_{ij}}{\det \mathfrak{r}^{\ast}_{ij}},~~
\frac{\det \bar{g}_{ij}}{\det g_{ij}}=\frac{1}{r^{2n-4}(r^2+\|\bar{\nabla}r\|_{\bar{g}}^2)},\]
 equation (\textbf{c}), (\ref{e: h and h polar}), and evolution equation (\ref{e: ev r}), we calculate
\begin{align*}
&\partial_t s^{\ast}=\partial_t \frac{1}{r}\\
&=\frac{\sqrt{r^2+\|\bar{\nabla}r\|_{\bar{g}}^2}}{r^3}
\mathcal{K}\\
&=\frac{\sqrt{r^2+\|\bar{\nabla}r\|_{\bar{g}}^2}}{r^3}\frac{\det h_{ij}}{\det g_{ij}}\\
&=\frac{\sqrt{r^2+\|\bar{\nabla}r\|_{\bar{g}}^2}}{r^3}\frac{\det \mathfrak{r}^{\ast}_{ij}}{\det \bar{g}_{ij}}
\frac{\det \bar{g}_{ij}}{\det g_{ij}}
\frac{\det h_{ij}}{\det \mathfrak{r}^{\ast}_{ij}}\\
&=\left(\frac{\sqrt{r^2+\|\bar{\nabla}r\|_{\bar{g}}^2}}{r^3}\right)^{2-n}\sigma_{n-1}^{\ast}
\left(r^{2n-4}(r^2+\|\bar{\nabla}r\|_{\bar{g}}^2)\right)^{-1}.\\
\end{align*}
Replacing $ r$ by $\frac{1}{s^{\ast}}$ in the expression
$\left(\frac{\sqrt{r^2+\|\bar{\nabla}r\|_{\bar{g}}^2}}{r^3}\right)^{2-n}
\left(r^{2n-4}(r^2+\|\bar{\nabla}r\|_{\bar{g}}^2)\right)^{-1}$ completes the proof.
\end{proof}
Theorem \ref{thm: main} implies that there is a unique solution $\varphi^{\ast}:\partial K\times [0,T)\to \mathbb{R}^{n}$ of the flow equation
\begin{align}\label{ref: main polar}
\partial_t \varphi^{\ast}(\cdot,t)=\left(\frac{\langle \varphi^{\ast},\nu^{\ast}\rangle^{n+2}}{\|\varphi^{\ast}\|^n}\frac{1}{\mathcal{K}^{\ast}}\right)(\cdot,t)\nu^{\ast}(\cdot,t), \varphi^{\ast}(\partial K,0)=\partial K_{0}^{\ast}
\end{align}
that satisfies $\varphi^{\ast}(\partial K,t)=\partial K_{t}^{\ast}$ (see Urbas \cite{Urbas} for details).
\section{uniform lower bound on the Gauss curvature}
\begin{lemma}
We have the following evolution equations under flow (\ref{ref: main polar}):
  \begin{align*}
  \partial_t \|\varphi^{\ast}\|^{n+1}=&\frac{\langle \varphi^{\ast},\nu^{\ast}\rangle^{n+2}}{\|\varphi^{\ast}\|^n}\frac{\dot{\mathcal{K}}_{~j}^{\ast i}}{\mathcal{K}^{\ast2}}\nabla_i\nabla^j \|\varphi^{\ast}\|^{n+1}\\
  &-(n^2-1)\frac{\langle \varphi^{\ast},\nu^{\ast}\rangle^{n+2}}{\|\varphi^{\ast}\|^3}\frac{\dot{\mathcal{K}}^{\ast ij}}{\mathcal{K}^{\ast2}}\langle \varphi^{\ast}, \varphi^{\ast}_{,i}\rangle\langle \varphi^{\ast}, \varphi^{\ast}_{,j}\rangle\\
  &+\frac{(n^2+n)}{\mathcal{K}^{\ast}}\frac{\langle \varphi^{\ast},\nu^{\ast}\rangle^{n+3}}{\|\varphi^{\ast}\|}-(n+1)\frac{\langle \varphi^{\ast},\nu^{\ast}\rangle^{n+2}}{\|\varphi^{\ast}\|}\frac{\dot{\mathcal{K}}_{~j}^{\ast i}}{\mathcal{K}^{\ast2}}\delta_{i}^{j},
  \end{align*}
  \begin{align*}&\partial_t \left(\frac{\langle \varphi^{\ast},\nu^{\ast}\rangle^{n+2}}{\|\varphi^{\ast}\|^n}\frac{1}{\mathcal{K}^{\ast}}\right)=\frac{\langle \varphi^{\ast},\nu^{\ast}\rangle^{n+2}}{\|\varphi^{\ast}\|^n}\frac{\dot{\mathcal{K}}_{~j}^{\ast i}}{\mathcal{K}^{\ast2}}\nabla_i\nabla^j \left(\frac{\langle \varphi^{\ast},\nu^{\ast}\rangle^{n+2}}{\|\varphi^{\ast}\|^n}\frac{1}{\mathcal{K}^{\ast}}\right)\\
      &+\frac{1}{\mathcal{K}^{\ast3}}\frac{\langle \varphi^{\ast},\nu^{\ast}\rangle^{2n+4}}{\|\varphi^{\ast}\|^{2n}}\dot{\mathcal{K}}_{~j}^{\ast i}h_{~i}^{\ast k}h_{~k}^{\ast j}-\frac{(n+2)}{\mathcal{K}^{\ast}}\frac{\langle \varphi^{\ast},\nu^{\ast}\rangle^{n+1}}{\|\varphi^{\ast}\|^n}\langle \varphi^{\ast},T\varphi^{\ast}\left(\nabla\left(\frac{\langle \varphi^{\ast},\nu^{\ast}\rangle^{n+2}}{\|\varphi^{\ast}\|^n}\frac{1}{\mathcal{K}^{\ast}}\right)\right)\rangle\\
      &+\frac{(n+2)}{\mathcal{K}^{\ast2}}\frac{\langle \varphi^{\ast},\nu^{\ast}\rangle^{2n+3}}{\|\varphi^{\ast}\|^{2n}}-\frac{n}{\mathcal{K}^{\ast2}}\frac{\langle \varphi^{\ast},\nu^{\ast}\rangle^{2n+5}}{\|\varphi^{\ast}\|^{2n+2}}.
      \end{align*}
\end{lemma}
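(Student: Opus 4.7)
The plan is to derive both identities by direct calculation from flow (\ref{ref: main polar}). Set $F:=\frac{\langle \varphi^{\ast},\nu^{\ast}\rangle^{n+2}}{\|\varphi^{\ast}\|^n}\frac{1}{\mathcal{K}^{\ast}}$, $S:=\langle \varphi^{\ast},\nu^{\ast}\rangle$, $r^{\ast}:=\|\varphi^{\ast}\|$, and $\omega_i:=\langle \varphi^{\ast},\varphi^{\ast}_{,i}\rangle$, so that $\partial_t\varphi^{\ast}=F\nu^{\ast}$ is an expanding normal flow. I treat the two identities one at a time.

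For the first, the time derivative is $\partial_t \|\varphi^{\ast}\|^{n+1}=(n+1) r^{\ast n-1} F S$. To match the stated right-hand side I compute the spatial Hessian of $u:=r^{\ast n+1}$: from $\nabla_i u=(n+1) r^{\ast n-1}\omega_i$ and the Gauss relation $\nabla_j\omega_i = g^{\ast}_{ij} - S h^{\ast}_{ij}$ (which follows from $\partial_j\varphi^{\ast}_{,i}=\Gamma^k_{ji}\varphi^{\ast}_{,k}-h^{\ast}_{ij}\nu^{\ast}$), one obtains
\[
\nabla_i\nabla_j u=(n+1)(n-1) r^{\ast n-3}\omega_i\omega_j+(n+1) r^{\ast n-1}(g^{\ast}_{ij}-S h^{\ast}_{ij}).
\]
Raising an index, contracting with $\dot{\mathcal{K}}^{\ast i}_{~j}$, and multiplying by the prefactor $F/\mathcal{K}^{\ast}$ produces three pieces. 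Two of them (those involving $\omega_i\omega_j$ and $\delta^j_i$) are exactly cancelled by the second and fourth terms on the right-hand side of the statement. Euler's identity $\dot{\mathcal{K}}^{\ast i}_{~j}h^{\ast j}_{~i}=(n-1)\mathcal{K}^{\ast}$ (valid because $\mathcal{K}^{\ast}=\det h^{\ast i}_{~j}$ is degree-$(n{-}1)$ homogeneous) reduces the third piece to $-(n^2-1) r^{\ast n-1} F S$, which combined with the $(n^2+n)$-term of the statement yields exactly $(n+1) r^{\ast n-1} F S=\partial_t u$.

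For the second identity I differentiate the logarithm: $\partial_t\log F=(n+2)\partial_t\log S-n\partial_t\log r^{\ast}-\partial_t\log\mathcal{K}^{\ast}$. Standard formulas for an expanding normal flow give $\partial_t\nu^{\ast}=-(\nabla F)^{\sharp}$ and hence
\[
\partial_t S = F - \omega_j\nabla^j F,\qquad \partial_t r^{\ast}=\frac{FS}{r^{\ast}},\qquad \partial_t h^{\ast i}_{~j}=-\nabla^i\nabla_j F - F h^{\ast i}_{~k}h^{\ast k}_{~j};
\]
differentiating $\mathcal{K}^{\ast}=\det h^{\ast i}_{~j}$ therefore yields $\partial_t\mathcal{K}^{\ast}=-\dot{\mathcal{K}}^{\ast j}_{~i}\nabla^i\nabla_j F - F\dot{\mathcal{K}}^{\ast j}_{~i}h^{\ast i}_{~k}h^{\ast k}_{~j}$. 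Substituting these into $\partial_t F = F\,\partial_t\log F$ and replacing powers of $F$ using the definition $F=S^{n+2}/(r^{\ast n}\mathcal{K}^{\ast})$ reproduces, term by term, the five summands in the statement; the transport term arises from the $\omega_j\nabla^j F$ contribution in $\partial_t S$, rewritten as $\langle \varphi^{\ast},T\varphi^{\ast}(\nabla F)\rangle$.

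The main obstacle is bookkeeping rather than any genuine geometric subtlety. Care is required with sign conventions (in particular $h^{\ast}_{ij}>0$ for the convex body $K^{\ast}$ with outward normal $\nu^{\ast}$, which dictates the minus signs in the Gauss formula and in $\partial_t h^{\ast i}_{~j}$), with the dimensional factor $(n-1)$ from Euler's identity (since $\partial K^{\ast}$ is $(n-1)$-dimensional), and with the distinction between $\dot{\mathcal{K}}^{\ast ij}$ and $\dot{\mathcal{K}}^{\ast i}_{~j}$ when matching the $\omega_i\omega_j$ and $\delta^j_i$ contractions. Testing on a round solution $K^{\ast}_t=B_{1/R(t)}(0)$, where all gradient and transport terms vanish, is a useful sanity check that the scalar coefficients $n^2+n$, $n+2$, and $n$ are correctly balanced.
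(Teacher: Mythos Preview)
Your argument is correct and follows exactly the route the paper takes: the paper's proof simply records the standard evolution equations $\partial_t g^{\ast}_{ij}=2Fh^{\ast}_{ij}$, $\partial_t h^{\ast i}_{~j}=-\nabla^i\nabla_j F-Fh^{\ast i}_{~k}h^{\ast k}_{~j}$, $\partial_t\nu^{\ast}=-T\varphi^{\ast}(\nabla F)$ (citing Huisken) and declares the rest ``straightforward,'' while you have written out that straightforward computation in full, including the Gauss--formula identity $\nabla_j\omega_i=g^{\ast}_{ij}-Sh^{\ast}_{ij}$ and Euler's identity $\dot{\mathcal{K}}^{\ast i}_{~j}h^{\ast j}_{~i}=(n-1)\mathcal{K}^{\ast}$ needed to close the first equation.
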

\begin{proof}
The evolution equation of $\|\varphi^{\ast}\|^{n+1}$ is straightforward. To obtain the second evolution equation, note that under flow (\ref{ref: main polar}), the metric $[g^{\ast}_{ij}]_{1\le i,j\le n-1}$, Weingarten tensor $[h_{~i}^{\ast j}=h^{\ast}_{ik}g^{\ast kj}]_{1\leq i,j\leq n-1}$, and the normal vector $\nu^{\ast}$ evolve by
\[\partial_t g^{\ast}_{ij}=2\frac{\langle \varphi^{\ast},\nu^{\ast}\rangle^{n+2}}{\|\varphi^{\ast}\|^n}\frac{h^{\ast}_{ij}}{\mathcal{K}^{\ast}},\]
\[\partial_t h_{~i}^{\ast j}=-\nabla_i\nabla^j\left(\frac{\langle \varphi^{\ast},\nu^{\ast}\rangle^{n+2}}{\|\varphi^{\ast}\|^n}\frac{1}{\mathcal{K}^{\ast}}\right)-\left(\frac{\langle \varphi^{\ast},\nu^{\ast}\rangle^{n+2}}{\|\varphi^{\ast}\|^n}\frac{1}{\mathcal{K}^{\ast}}\right)h_{~i}^{\ast k}h_{~k}^{\ast j},\]
\[\partial_t\nu^{\ast}=-T\varphi^{\ast}\left(\nabla\left(\frac{\langle \varphi^{\ast},\nu^{\ast}\rangle^{n+2}}{\|\varphi^{\ast}\|^n}\frac{1}{\mathcal{K}^{\ast}}\right)\right).\]
See Huisken \cite[Theorem 3.4]{Hus} for details.
\end{proof}
\begin{lemma} \label{lem: prev}
If $\gamma\|\varphi^{\ast}\|\leq\langle \varphi^{\ast},\nu^{\ast}\rangle$ for some $0<\gamma\leq 1$ on $[0,T)$, then we can find $\lambda\geq\left(\frac{2n^2+5n+2}{(n-1)\gamma^{\frac{4n+8}{n-1}+2n+4}}\right)^{n-1}$ large enough such that
$\|\varphi^{\ast}\|^{n-1}\leq \frac{\lambda}{\mathcal{K}^{\ast}}$
on $[0,T)$.
\end{lemma}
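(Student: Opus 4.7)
The plan is to apply the parabolic maximum principle to the auxiliary function
\[
W := \|\varphi^{\ast}\|^{n+1} - \lambda\, F, \qquad F := \frac{\langle\varphi^{\ast},\nu^{\ast}\rangle^{n+2}}{\|\varphi^{\ast}\|^{n}\,\mathcal{K}^{\ast}},
\]
namely the difference of a suitable power of the Euclidean norm of the polar embedding and $\lambda$ times the speed of the dual flow (\ref{ref: main polar}), as announced in the Introduction. Writing $\tau := \langle \varphi^{\ast},\nu^{\ast}\rangle/\|\varphi^{\ast}\|\in[\gamma,1]$, the inequality $W\leq 0$ is equivalent to $\|\varphi^{\ast}\|^{n-1}\mathcal{K}^{\ast}\leq\lambda\tau^{n+2}\leq\lambda$, which is exactly the claim.

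First I would fix $\lambda$ so large that (i) $W(\cdot,0)\leq 0$ on $\mathbb{S}^{n-1}$ (possible because $K_0^{\ast}$ is a fixed smooth, strictly convex body, so both $\|\varphi^{\ast}(\cdot,0)\|$ and $F(\cdot,0)^{-1}$ are bounded) and (ii) $\lambda$ also exceeds the stated quantitative lower bound. If $W$ later becomes positive somewhere in $\mathbb{S}^{n-1}\times(0,T)$, pick a first point $(x_0,t_0)$ with $W(x_0,t_0)=0$; there $\nabla W = 0$, $\nabla^2 W\leq 0$, and $\partial_t W\geq 0$. The decisive feature is that the two evolution equations in the preceding lemma share the same uniformly elliptic diffusion operator $L := (F\,\dot{\mathcal{K}}^{\ast i}_{\ j}/\mathcal{K}^{\ast 2})\nabla_i\nabla^j$, so $LW\leq 0$ at $(x_0,t_0)$; hence
\[
0 \;\leq\; \partial_t W(x_0,t_0) \;\leq\; \mathcal{R}_R - \lambda\,\mathcal{R}_F,
\]
where $\mathcal{R}_R$ and $\mathcal{R}_F$ collect the lower-order (non-diffusive) terms of the two evolution equations. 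The task is to contradict this last inequality using the quantitative assumption on $\lambda$.

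At $(x_0,t_0)$ I substitute $F = r^{n+1}/\lambda$ (from $W=0$, where $r := \|\varphi^{\ast}\|$), $\nabla F = (n+1)r^n\,\nabla r/\lambda$ (from $\nabla W=0$), and the identity $\langle\varphi^{\ast},\varphi^{\ast}_{,i}\rangle = r\,\nabla_i r$ to rewrite the two gradient contributions in $\mathcal{R}_R$ and $\mathcal{R}_F$ as explicit expressions in $|\nabla r|^2$ weighted by $(h^{\ast -1})^{ij}$ and $g^{\ast ij}$ respectively; these can then be absorbed or discarded in our favour. Next I bound $H^{\ast}$ and $\operatorname{tr}((h^{\ast})^{-1})$ via the AM-GM inequalities
\[
H^{\ast}\geq (n-1)(\mathcal{K}^{\ast})^{1/(n-1)}, \qquad \operatorname{tr}((h^{\ast})^{-1})\geq (n-1)(\mathcal{K}^{\ast})^{-1/(n-1)},
\]
and substitute $\mathcal{K}^{\ast} = \tau^{n+2}\lambda/r^{n-1}$ (from $W=0$). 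After cancelling the common factor $r^{2n+1}/\lambda$, the resulting inequality involves only $\tau\in[\gamma,1]$ and $\lambda$; its saturation at $\tau=\gamma$ forces $\lambda < \bigl((2n^2+5n+2)/((n-1)\gamma^{(4n+8)/(n-1)+2n+4})\bigr)^{n-1}$, contradicting our choice of $\lambda$. Hence $W\leq 0$ throughout $[0,T)$, which is the claim.

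The main obstacle is the algebraic bookkeeping: correctly collecting and signing the seven non-diffusive terms, handling the two gradient contributions (weighted by the different metrics $g^{\ast ij}$ and $(h^{\ast -1})^{ij}$) via $\nabla W=0$, and extracting the precise coefficient $2n^2+5n+2$ together with the exponent $2(n+1)(n+2) = (n-1)((4n+8)/(n-1)+2n+4)$ of $1/\gamma$. Modulo this calculation the proof is a direct application of the parabolic maximum principle to the cleverly chosen auxiliary function $W$.
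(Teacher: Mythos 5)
Your proposal is correct and follows essentially the same route as the paper: your $W$ is exactly the paper's test function $\chi$, applied with the same first-zero-time maximum principle, the same use of $\nabla W=0$ to turn the gradient term into $(n+1)\|\varphi^{\ast}\|^{n-1}\|\varphi^{\ast\top}\|^{2}$, and your AM--GM bound is precisely the paper's inverse-concavity step since $\dot{\mathcal{K}}^{\ast i}_{~j}h^{\ast k}_{~i}h^{\ast j}_{~k}=\mathcal{K}^{\ast}H^{\ast}\geq(n-1)\mathcal{K}^{\ast\frac{n}{n-1}}$. Your substitution $\mathcal{K}^{\ast}=\lambda\tau^{n+2}/\|\varphi^{\ast}\|^{n-1}$ at the touching point is equivalent to the paper's bound $\gamma^{\frac{n+2}{n-1}}(\lambda/\mathcal{K}^{\ast})^{\frac{1}{n-1}}\leq\|\varphi^{\ast}\|\leq(\lambda/\mathcal{K}^{\ast})^{\frac{1}{n-1}}$ and leads to the same final inequality $2n^2+5n+2-(n-1)\gamma^{\frac{4n+8}{n-1}+2n+4}\lambda^{\frac{1}{n-1}}<0$.
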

\begin{proof} We let $\lambda\geq\left(\frac{2n^2+5n+2}{(n-1)\gamma^{\frac{4n+8}{n-1}+2n+4}}\right)^{n-1}$ be a large number such that $\chi:=\|\varphi^{\ast}\|^{n+1}-\lambda \frac{\langle \varphi^{\ast},\nu^{\ast}\rangle^{n+2}}{\|\varphi^{\ast}\|^n}\frac{1}{\mathcal{K}^{\ast}}$ is negative at $t=0$. Our aim is to show that $\chi$ remains negative. We calculate the evolution equation of $\chi$ and apply the maximum principle to $\chi$ on $[0,\tau]$, where $\tau>0$ is the first time that for some $y\in\partial K_{\tau}^{\ast}$ we have $\chi(\tau,y)=0$. Note that at such a point where the maximum of $\chi$ is achieved
\begin{align*}\nabla \chi=0\Rightarrow\langle \varphi^{\ast},T\varphi^{\ast}\left(\nabla\left(\lambda\frac{\langle \varphi^{\ast},\nu^{\ast}\rangle^{n+2}}{\|\varphi^{\ast}\|^n}\frac{1}{\mathcal{K}^{\ast}}\right)\right)\rangle&=\langle \varphi^{\ast},T\varphi^{\ast}\left(\nabla\|\varphi^{\ast}\|^{n+1}\right)\rangle\\
&=(n+1)\|\varphi^{\ast}\|^{n-1}\|\varphi^{\ast\top}\|^2,
\end{align*}
 $$\frac{\langle \varphi^{\ast},\nu^{\ast}\rangle^{n+2}}{\|\varphi^{\ast}\|^n}\frac{\dot{\mathcal{K}}_{~j}^{\ast i}}{\mathcal{K}^{\ast2}}\nabla_i\nabla^j \chi\leq0,$$
 and in view of the assumption $\gamma\|\varphi^{\ast}\|\leq\langle \varphi^{\ast},\nu^{\ast}\rangle$ we have
 $$\gamma^{\frac{n+2}{n-1}}\left(\frac{\lambda}{\mathcal{K}^{\ast}}\right)^{\frac{1}{n-1}}\leq\|\varphi^{\ast}\|\leq \left(\frac{\lambda}{\mathcal{K}^{\ast}}\right)^{\frac{1}{n-1}}.$$
Here $\varphi^{\ast\top}(\cdot,t)$ is the tangential component of $\varphi^{\ast}(\cdot,t)$ to $\partial K_t^{\ast}.$ Moreover, we always have
$$-(n^2-1)\frac{\langle \varphi^{\ast},\nu^{\ast}\rangle^{n+2}}{\|\varphi^{\ast}\|^3}\frac{\dot{\mathcal{K}}^{\ast ij}}{\mathcal{K}^{\ast2}}\langle \varphi^{\ast}, \varphi^{\ast}_{,i}\rangle\langle \varphi^{\ast}, \varphi^{\ast}_{,j}\rangle=-\frac{n^2-1}{4}\frac{\langle \varphi^{\ast},\nu^{\ast}\rangle^{n+2}}{\|\varphi^{\ast}\|^3}\frac{\dot{\mathcal{K}}^{\ast ij}}{\mathcal{K}^{\ast2}}(\|\varphi^{\ast}\|^2)_{,i}(\|\varphi^{\ast}\|^2)_{,j}\le 0.$$
Thus, at $(\tau,y)$ we obtain
\begin{align*}
0\leq\partial_t\chi\leq& \frac{(n^2+n)}{\mathcal{K}^{\ast}}\frac{\langle \varphi^{\ast},\nu^{\ast}\rangle^{n+3}}{\|\varphi^{\ast}\|}-(n+1)\frac{\langle \varphi^{\ast},\nu^{\ast}\rangle^{n+2}}{\|\varphi^{\ast}\|}\frac{\dot{\mathcal{K}}_{~j}^{\ast i}}{\mathcal{K}^{\ast2}}\delta_{i}^{j}\\
&-\frac{\lambda}{\mathcal{K}^{\ast3}}\frac{\langle \varphi^{\ast},\nu^{\ast}\rangle^{2n+4}}{\|\varphi^{\ast}\|^{2n}}\dot{\mathcal{K}}_{~j}^{\ast i}h_{~i}^{\ast k}h_{~k}^{\ast j}+\frac{(n+2)(n+1)}{\mathcal{K}^{\ast}}\frac{\langle \varphi^{\ast},\nu^{\ast}\rangle^{n+1}}{\|\varphi^{\ast}\|}\|\varphi^{\ast\top}\|^2\\
      &-\lambda\frac{(n+2)}{\mathcal{K}^{\ast2}}\frac{\langle \varphi^{\ast},\nu^{\ast}\rangle^{2n+3}}{\|\varphi^{\ast}\|^{2n}}+\lambda\frac{n}{\mathcal{K}^{\ast2}}\frac{\langle \varphi^{\ast},\nu^{\ast}\rangle^{2n+5}}{\|\varphi^{\ast}\|^{2n+2}}.
\end{align*}
Dropping all the negative terms except $-\frac{\lambda}{\mathcal{K}^{\ast3}}\frac{\langle \varphi^{\ast},\nu^{\ast}\rangle^{2n+4}}{\|\varphi^{\ast}\|^{2n}}\dot{\mathcal{K}}_{~j}^{\ast i}h_{~i}^{\ast k}h_{~k}^{\ast j}$ and then
using the inverse-concavity of $\mathcal{K}^{\ast\frac{1}{n-1}}$ (e.q. $\dot{\mathcal{K}}_{~i}^{\ast j}h_{~k}^{\ast i}h_{~j}^{\ast k}\geq (n-1)\mathcal{K}^{\ast\frac{n}{n-1}}$),
the right-hand side of $\partial_t\chi$ simplifies to
\begin{align*}
0\leq\partial_t\chi
      <&
\frac{(n^2+n)}{\mathcal{K}^{\ast}}\|\varphi^{\ast}\|^{n+2}
-(n-1)\gamma^{2n+4}\frac{\lambda}{\mathcal{K}^{\ast\frac{2n-3}{n-1}}}\|\varphi^{\ast}\|^4\\
&+\frac{(n+2)(n+1)}{\mathcal{K}^{\ast}}\|\varphi^{\ast}\|^{n+2}
      +\lambda\frac{n}{\mathcal{K}^{\ast2}}\|\varphi^{\ast}\|^{3}\\
      \leq& \frac{\lambda^{\frac{n+2}{n-1}}}{\mathcal{K}^{\ast\frac{2n+1}{n-1}}}\left(2n^2+5n+2-(n-1)\gamma^{\frac{4n+8}{n-1}+2n+4}\lambda^{\frac{1}{n-1}}\right)<0.
\end{align*}
\end{proof}
\begin{thmmain}[Uniform lower bound]
There is a uniform lower bound on the Gauss curvature of the normalized solution of the Gauss curvature flow.
\end{thmmain}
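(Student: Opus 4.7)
The plan is to combine Lemma \ref{lem: prev} with the Guan--Ni $C_0$ bounds on the normalized solution via the pointwise polar duality of the Gauss curvature: an upper bound on $\mathcal{K}^{\ast}$ converts directly to a lower bound on $\mathcal{K}$.

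First, I would use the Guan--Ni $C_0$ estimates to fix uniform radii $\rho, R>0$ with $B_{\rho}\subset \tilde K_t\subset B_R$ for the normalized solution, so that $B_{1/R}\subset \tilde K_t^{\ast}\subset B_{1/\rho}$. At every boundary point of a convex body $L$ trapped between two concentric balls $B_a\subset L\subset B_b$, one has $\langle \varphi, \nu\rangle\ge a$ and $\|\varphi\|\le b$, hence
\begin{equation*}
\gamma\,\|\varphi^{\ast}\|\le \langle \varphi^{\ast},\nu^{\ast}\rangle
\end{equation*}
holds on $\tilde K_t^{\ast}$ with $\gamma:=\rho/R$. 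Both sides of this inequality are homogeneous of the same degree in $\varphi^{\ast}$, so the same $\gamma$ works on the unrescaled polar flow, and Lemma \ref{lem: prev} then delivers a uniform $\lambda=\lambda(n,\gamma)$ with
\begin{equation*}
\|\varphi^{\ast}\|^{n-1}\mathcal{K}^{\ast}\le \lambda,
\end{equation*}
a scale-invariant bound that equally applies to the normalized polar flow.

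Second, combining the radial formula (\textbf{c}), the identity (\ref{e: h and h polar}), and the determinant relations already recorded in the proof of Theorem \ref{thm: main}, a short algebraic manipulation yields the pointwise polar duality
\begin{equation*}
\mathcal{K}\,\mathcal{K}^{\ast}=\bigl(s/r\bigr)^{n+1}
\end{equation*}
at polar-corresponding boundary points of $K$ and $K^{\ast}$. Since $s/r$ is the cosine of the angle between position vector and outward normal, the same $C_0$ bounds give $s/r\ge \rho/R$ on $\tilde K_t$ uniformly.

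Putting the two ingredients together: on the normalized polar flow $\|\tilde\varphi^{\ast}\|\ge 1/R$, hence $\tilde{\mathcal{K}}^{\ast}\le \lambda R^{n-1}$, and the duality identity then gives
\begin{equation*}
\tilde{\mathcal{K}}=\frac{(s/r)^{n+1}}{\tilde{\mathcal{K}}^{\ast}}\ge \frac{(\rho/R)^{n+1}}{\lambda R^{n-1}}>0
\end{equation*}
uniformly in time, which is the required estimate. The substantive analytic work has already been absorbed into Lemma \ref{lem: prev} by the parabolic maximum principle applied to the auxiliary $\chi$; the remaining obstacle, which I expect to be routine, is the scaling bookkeeping together with the extraction of the Gauss-curvature duality identity from the formulas of Section 3.
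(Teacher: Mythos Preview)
Your proposal is correct and follows essentially the same route as the paper: the Guan--Ni $C_0$ bounds supply the hypothesis of Lemma~\ref{lem: prev} with $\gamma=\rho/R=C_1/C_2$, and the resulting scale-invariant bound $\mathcal{K}^{\ast}\|\varphi^{\ast}\|^{n-1}\le\lambda$ is converted to a lower bound on $\mathcal{K}$ via the polar Gauss-curvature duality, which the paper cites from \cite{H,Kal} in the equivalent form $(\mathcal{K}/s^{n+1})(x)\,(\mathcal{K}^{\ast}/s^{\ast\,n+1})(x^{\ast})=1$ rather than extracting it from Section~3. The only cosmetic difference is that the paper tracks explicit powers of $(T-t)$ in place of your appeal to scale-invariance.
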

\begin{proof}
In view of the $C_0$ estimates of Guan and Ni \cite[Estimates (5.8)]{PL} we have $$0< C_1\leq s^{\ast}(\cdot,t)(T-t)^{\frac{1}{n}}\leq C_2<+\infty.$$ Therefore, the identity $\|\varphi^{\ast}\|^2=s^{\ast2}+\|\bar{\nabla}s^{\ast}\|^2$ and Lemma \ref{lem: prev} with $\gamma =\frac{C_1}{C_2}$ imply that there is a uniform upper bound on $\mathcal{K}^{\ast}(T-t)^{-\frac{n-1}{n}}.$ To complete the proof, we recall the following identity. For every $x \in \partial K$, there exists an $x^\ast \in \partial K^\ast$ such that
\[\left(\frac{\mathcal{K}}{s^{n+1}}\right)(x)\left(\frac{\mathcal{K}^{\ast}}{s^{\ast n+1}}\right)(x^{\ast})=1,\]
and $x,x^{\ast}$ are related by $\langle x, x^{\ast}\rangle=1$ (proof of this identity is given in \cite{H,Kal}).
Using the $C_0$ estimates of Guan and Ni \cite[Estimates (5.8)]{PL} once again, we conclude that $\mathcal{K}(T-t)^{\frac{n-1}{n}}$ is uniformly bounded below.
\end{proof}
\begin{remark}
To my knowledge, a quantity similar to $\chi$ was first considered by O.C. Schn\"{u}rer \cite[Lemma 6.1]{Sch} and then by Q.R. Li in \cite[Lemma 5.1]{Li} to obtain regularity of the normalized solutions of the inverse Gauss curvature flow (in fact, O.C. Schn\"{u}rer considered several more curvature flows). Therefore, in view of dual flow (\ref{ref: main polar}) and the earlier works by the author and A. Stancu \cite{IS,Ivaki,Ivaki1}, it was very natural to consider $\chi$ as a candidate for the purpose of this paper.
\end{remark}
\textbf{Acknowledgment:}
The author would like to thank the referee for suggestions.
The work of the author was supported in part by Austrian Science Fund (FWF) Project P25515-N25.
\bibliographystyle{amsplain}

\begin{thebibliography}{10}
\bibitem{BA} B. Andrews, \textit{Monotone quantities and unique limits for evolving convex hypersurfaces}, Int. Math. Res. Notices \textbf{1997} (1997), 1001--1031.
\bibitem{BA1} B. Andrews, \textit{Gauss curvature flow: the fate of the rolling stones}, Invent. Math. \textbf{138} (1999), 151--161.
\bibitem{AJ} B. Andrews and J. McCoy, \textit{Convex hypersurfaces with pinched principal curvatures and flow of convex hypersurfaces by high powers of curvature},
Trans. Amer. Math. Soc. \textbf{364} (2012), 3427--3447.
\bibitem{Ch2} B. Chow, \textit{On Harnack's inequality and entropy for the Gaussian curvature flow}, Comm. Pure Appl. Math. \textbf{44} (1991), 469--483.
\bibitem{W} W.J. Firey, \textit{On the shapes of worn stones}, Mathematika \textbf{21} (1974), 1--11.
\bibitem{PL} P. Guan, L. Ni, \textit{Entropy and a convergence theorem for Gauss curvature flow in high dimension}, (to appear) J. Eur. Math. Soc. (2013),  arXiv:1306.0625v1 .
\bibitem{H} D. Hug, \textit{Curvature relations and affine surface area for a general convex body and its polar}, Results Math. \textbf{29} (1996), 233--248.
\bibitem{Ha} R. Hamilton, \textit{Remarks on the entropy and Harnack estimates for the Gauss curvature flow}, Comm. Anal. Geom. \textbf{2} (1994), 155--165.
\bibitem{Hus} G. Huisken, \textit{Flow by mean curvature of convex surfaces into spheres} J. Differential Geom. \textbf{20} (1984), 237--266.
\bibitem{IS} M.N. Ivaki and A. Stancu, \textit{Volume preserving centro-affine normal flows}, Comm. Anal. Geom. 21 (2013), 671--685.
\bibitem{Ivaki} M.N. Ivaki, \textit{An application of dual convex bodies to the inverse Gauss curvature flow},  Proc. Amer. Math. Soc. \textbf{143} (2015), 1257--1271.
\bibitem{Ivaki1} M.N. Ivaki, \textit{The planar Busemann-Petty centroid inequality and its stability}, (to appear) Trans. Amer. Math. Soc. (2014), arXiv:1312.4834v9 .
\bibitem{Kal} F.J. Kaltenbach, \textit{Asymptotische Verhalten zuf\"{a}lliger konvexer Ployeder}, Dissertation, Feiburg, 1990.
\bibitem{Li} Q.R. Li, \textit{Surfaces expanding by the power of the Gauss curvature flow}, Proc. Amer. Math. Soc. \textbf{138} (2010), 4089-–4102.
\bibitem{Sch} O.C. Schn\"{u}rer, \textit{Surfaces expanding by the inverse Gauss curvature flow}, J. Reine Angew. Math. \textbf{600} (2006), 117--134.
\bibitem{Urbas} J.I.E. Urbas, \textit{An expansion of convex hypersurfaces}, J. Differential Geom. \textbf{33} (1991), 91--125.
\bibitem{cho} K. Tso (K.S. Chou), \textit{Deforming a hypersurface by its Gauss-Kronecker curvature}, Comm. Pure Appl. Math. \textbf{38} (1985), 867--882.
\end{thebibliography}

\end{document}